\documentclass[10pt leqno]{amsart}
\usepackage{color,transparent}
\usepackage{verbatim}
\usepackage{array}
\usepackage{tabularx}
\usepackage{booktabs}

\usepackage[bookmarks=true,%
    colorlinks=true,%
    linkcolor=blue,%
    citecolor=blue,%
    filecolor=blue,%
    menucolor=blue,%
    urlcolor=blue]{hyperref}
\usepackage{stmaryrd}
\usepackage{multirow}
\usepackage{float}
\usepackage{array}
\usepackage[shortlabels]{enumitem}
\usepackage{pdfsync}
\usepackage{tikz}
\usepackage{tikz-cd}
\usepackage{float}
\usepackage[T1]{fontenc}
\usepackage[utf8]{inputenc}
\usepackage[textsize=small]{todonotes}
\usepackage{svg}

\usepackage{graphicx}
\graphicspath{{mpdraws/}{draws/}}

\usepackage{latexsym,enumitem}
\usepackage{amssymb}

 \usepackage[sort,numbers]{natbib} 
 \usepackage{mathtools}
\usepackage[normalem]{ulem}
\usepackage[margin=1.5in]{geometry}

\usepackage[english]{babel}
\usepackage{comment}
\usepackage{color}

\let\phi\varphi

\let\epsilon\varepsilon
\let\subset\subseteq
\newcommand{\be}{\begin{equation*}}
 \newcommand{\ee}{\end{equation*}}
\newcommand{\bpf}{\begin{dimo}}
\newcommand{\epf}{\end{dimo}}
\newcommand{\bdefi}{\begin{defin}}
\newcommand{\edefi}{\end{defin}}
\newcommand{\bthm}{\begin{thm}}
\newcommand{\ethm}{\end{thm}}
\newcommand{\blem}{\begin{lem}}
\newcommand{\elem}{\end{lem}}
\newcommand{\bcor}{\begin{cor}}
\newcommand{\ecor}{\end{cor}}

\newcommand{\bprop}{\begin{prop}}
\newcommand{\eprop}{\end{prop}}
\newcommand{\bese}{\begin{ese}}
\newcommand{\eese}{\end{ese}}
\newcommand{\brem}{\begin{rem}}
\newcommand{\erem}{\end{rem}}
\newcommand{\bpfc}{\begin{dimoclaim}}
\newcommand{\epfc}{\end{dimoclaim}}

\newcommand{\wt}[1]{\widetilde{#1}}		
			
\newenvironment{quot}
{
	\vspace{-0.2cm}
	\vspace{0.2cm}
}

\theoremstyle{definition}
\newtheorem{d1}{Definition}[section] 

\newenvironment{defin}
{
	\begin{quot}
		\begin{d1}
		}
		{\end{d1}
	\end{quot}

}

\theoremstyle{definition}
\newtheorem{r1}[d1]{Remark}

\newenvironment{rem}
{
	\begin{quot}
		\begin{r1}
		}
		{\end{r1}
	\end{quot}
}

\theoremstyle{definition}
\newtheorem{e1}[d1]{Exercise}

\theoremstyle{definition}
\newtheorem{ese1}[d1]{Example}

\newenvironment{ese}
{
	\begin{quot}
		\begin{ese1}
	}
	{	
		\end{ese1}
	\end{quot}
}

\theoremstyle{definition}

\theoremstyle{definition}
\newtheorem{f2}[d1]{Fact}

\theoremstyle{definition}

\theoremstyle{definition}

\theoremstyle{definition}
\newtheorem{t1}[d1]{Theorem}

\newenvironment{thm}
{
	\begin{quot}
		\begin{t1}}
		{\end{t1}
	\end{quot}
}

\theoremstyle{definition}
\newtheorem*{T1*}{Theorem}

\newenvironment{teor*}
{
	\begin{quot}
		\begin{T1*}}
		{\end{T1*}
	\end{quot}
}

\newenvironment{dimo}
{\begin{proof}[Proof]
	}
	{\end{proof}}

\newenvironment{dimoclaim}{\emph{Proof of Claim:}\;}{\hfill$\square$}

	\theoremstyle{definition}
	\newtheorem{l1}[d1]{Lemma}
	
	\newenvironment{lem}
	{
		\begin{quot}
			\begin{l1}}
			{\end{l1}
		\end{quot}
	}
	\theoremstyle{definition}
	\newtheorem{p1}[d1]{Proposition}
	
	\newenvironment{prop}
	{
		\begin{quot}
			\begin{p1}}
			{\end{p1}
		\end{quot}
	}
	
	\theoremstyle{definition}
	\newtheorem{c1}[d1]{Corollary}
	
	\newenvironment{cor}
	{
		\begin{quot}
			\begin{c1}}
			{\end{c1}
		\end{quot}
	}

		\renewenvironment{abstract}
	{\list{}{\rightmargin\leftmargin}%
		\item[\textbf{Abstract:}]\relax}
	{\endlist}

 \newtheorem*{Theorem*}{Theorem}
 \newtheorem*{Proposition*}{Proposition}
 \newtheorem*{Lemma*}{Lemma}

\newcommand{\HHH}{\mathcal{H}}
\newcommand{\D}{\mathbb{D}}
\newcommand{\ML}{\mathcal{ML}}
\newcommand{\supp}{\mathrm{supp}}

\newcommand{\GCb}{\mathcal{G}\mathcal{C}_B}
\newcommand{\MLb}{\mathcal{ML}_B}
\newcommand{\GC}{\mathcal{G}\mathcal{C}}

\renewcommand{\tilde}{\widetilde}

\newtheorem{mainthm}{Theorem}

\makeatletter
\newcommand{\addresseshere}{%
  \enddoc@text\let\enddoc@text\relax
}
\makeatother

\title[Injectivity and continuity of the intersection form in infinite type]{Injectivity and continuity of the intersection form in infinite type}
\author{Tommaso Cremaschi}
\address{Department of Mathematics, Trinity College Dublin}
\email{cremasct@tcd.ie}
\author{D\'idac Mart\'inez-Granado}
\address{Department of Mathematics, University of Luxembourg}
\email{didac.martinezgranado@uni.lu}

\date{}
\thanks{}

\begin{document}
\maketitle
\begin{abstract}
We study two
fundamental properties of the intersection pairing between geodesic currents on infinite-type surfaces which are classical
in the finite-type closed setting: injectivity of the marked length spectrum
and continuity of the intersection form. We show that geodesic currents on infinite-type surfaces of the first kind are
determined by their intersection numbers with closed curves, extending
Otal's marked length spectrum rigidity argument from finite-type to infinite-type surfaces. In
contrast, we prove that the intersection pairing is not continuous in
general for the natural topologies by
constructing explicit counterexamples. We then prove sufficient criteria for continuity of intersection numbers for infinite-type surfaces.
\end{abstract}

\section{Introduction}

The geometry and dynamics of hyperbolic surfaces of infinite type have recently attracted
considerable attention; see for instance
\cite{AV20:Big,Bestvina2023:Nielsen}. In particular, the theory of infinite-type surfaces and their big
mapping class groups has seen substantial development in recent years (see e.g.~\cite{AFP23,MR23}).

A theme in this line of research is to understand which features
of the classical theory for finite-type surfaces persist in the
infinite-type setting, and which phenomena break down.
In this paper we focus on certain features
of Bonahon's theory of geodesic currents (see \cite{BS21:Boundary,Sar21:Traintracks, Sar22:Heights, BS24:Transverse,Sar24:QuadraticFunctioTheory} among many other references). First, we explore to what extent two structural results concerning the geometric intersection form for geodesic currents on closed surfaces (injectivity of its marked length spectrum and continuity) extend to 
infinite-type surfaces; second, we highlight limitations in the continuity that arise in this setting.

\medskip
\noindent
\textbf{Injectivity of the marked length spectrum.}
For a closed surface, a geodesic current is determined by its
intersection numbers with closed curves. In other words, the map sending
a current to its marked length spectrum is injective~\cite[Th\'eor\`eme~2]{Otal90:SpectreMarqueNegative}. We prove that the
same holds for infinite-type first-kind surfaces (i.e., surfaces equal to their convex core) by adapting Otal's
original argument to this setting, where some tools available in the compact surface setting are no longer present.

\begin{mainthm}[Injectivity of intersection numbers]
Let $S$ be an infinite-type surface of the first kind.
Let $\mathcal C$ denote the collection of all closed curves on $S$, and $\GC(S)$ the space of geodesic currents.
Then, the map
\[
\iota \colon \GC(S) \longrightarrow \mathbb{R}^{\mathcal C}, \qquad
\alpha \longmapsto (\iota(\alpha,\delta))_{\delta\in\mathcal C}
\]
is injective.
\label{mainthm:injective}
\end{mainthm}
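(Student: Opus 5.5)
Following Otal, the plan is to show that the intersection numbers $\iota(\alpha,\delta)$, $\delta\in\mathcal C$, determine the $\alpha$-mass of every set in a basis of compact open subsets of the space $\mathcal G(\tilde S)$ of geodesics in the universal cover $\tilde S=\mathbb H^2$. Since $S$ is of the first kind, the limit set of $\pi_1(S)$ is all of $\partial\mathbb H^2$. Hence $\mathcal G(\tilde S)=(S^1\times S^1\setminus\Delta)/\mathbb Z_2$, and a current is a $\pi_1(S)$-invariant Radon measure on it. It suffices to determine $\alpha$ on ``boxes'' $[a,b]\times[c,d]$, where $a,b,c,d$ are pairwise distinct points of $S^1$ in cyclic order, and such boxes generate the Borel $\sigma$-algebra. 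Because endpoints of lifts of closed geodesics (axes of hyperbolic elements) are dense in $S^1$ for a first-kind group, and $\alpha$ is Radon, it is enough to treat boxes whose corners are endpoints of axes. One also has to control the $\alpha$-mass of the boundary geodesics of the box; the plan is to handle this by outer and inner approximations through slightly larger and smaller boxes with axis endpoints.

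The key step is Otal's formula. Let $g,h$ be hyperbolic elements with axes $\tilde g$ from $a$ to $c$ and $\tilde h$ from $b$ to $d$ (suitably arranged), intersecting transversally. Consider the closed curves represented by $g^n h^m$ and their variants. Otal shows, for closed surfaces, that as $n,m\to\infty$ the quantity
\[
\iota(\alpha,g^nh^m)-n\,\iota(\alpha,g)-m\,\iota(\alpha,h)
\]
converges to $-2\alpha(B)$, up to sign conventions, where $B$ is the box of geodesics crossing both $\tilde g$ and $\tilde h$ in a prescribed manner. More precisely, the axis of $g^nh^m$ fellow-travels a concatenation of segments of $\tilde g$ and $\tilde h$, and the geodesics crossing the closed geodesic but not the broken path (or vice versa) are exactly those separating the corner. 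I would redo this argument in $\mathbb H^2$ directly, since it only uses hyperbolic geometry of the universal cover and the invariance of $\alpha$, not compactness. From differences of such limits, combining $g^nh^m$ with $g^nh^{-m}$ etc., one recovers $\alpha$ of each box with axis-endpoint corners.

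The main obstacle is where compactness entered Otal's proof. In infinite type, $\alpha$ need not have finite total mass on geodesics meeting a compact set in a uniform way, and geodesics of $\alpha$ may escape into ends. The convergence step needs dominated convergence: the symmetric difference between geodesics crossing a fundamental segment of the axis of $g^nh^m$ and those crossing the broken path must shrink to the corner boxes. I would establish this by showing that all geodesics involved lie in a fixed compact subset $K\subset\mathcal G(\tilde S)$, namely geodesics meeting a fixed compact region of $\mathbb H^2$ near the intersection point, up to the $\langle g\rangle$, $\langle h\rangle$-translates that are accounted for by $n\,\iota(\alpha,g)$ and $m\,\iota(\alpha,h)$. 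Then $\alpha(K)<\infty$ because $\alpha$ is Radon, and the argument goes through. If boundary geodesics carry atoms, I would restrict to corners with $\alpha(\partial B)=0$. Since at most countably many endpoint values carry positive mass, such corners remain dense among axis endpoints, and equality of $\alpha$ and $\beta$ on these boxes forces $\alpha=\beta$.
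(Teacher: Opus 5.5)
Your route differs from the paper's. You corner the boxes at fixed points of hyperbolic elements and read off their mass from the corner formula for $g^nh^m$; the paper instead uses a geodesic with dense $\Gamma$-orbit (Theorem~\ref{denseorbit}) and approximates short arcs near its translates by closed geodesics. \textbf{Your last step, where you dispose of atoms, fails.} The fixed points of hyperbolic elements form a \emph{countable} set, so removing the countably many $\xi$ with $\alpha(P(\xi))>0$ can remove all of them. Concretely, enumerate the closed geodesics $c_1,c_2,\dots$ of $X$ and set $\alpha=\sum_k 2^{-k}(1+\ell(c_k))^{-1}\delta_{c_k}$. The number of lifts of $c_k$ meeting a compact $Q\subset\widetilde X$ is at most $C_Q\,\ell(c_k)$, so $\alpha$ is a geodesic current. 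Yet $\alpha$ has an atom on the axis of every hyperbolic element. Hence every box with corners $g^\pm,h^\pm$ contains $\widetilde g$ in its boundary and has $\alpha(\partial B)>0$, so your claim that null-boundary corners ``remain dense among axis endpoints'' is false. This is exactly why the paper's Lemma~\ref{lem:pencil_zero} takes corners at endpoints of dense-orbit geodesics: these form a dense $G_\delta$, and Baire's theorem lets one avoid a countable set. Within your route, the repair is to drop any null-boundary hypothesis. Show that, with half-open fundamental segments, $n\iota(\alpha,g)+m\iota(\alpha,h)-\iota(\alpha,g^nh^m)$ converges to exactly twice the $\alpha$-mass of the \emph{open} box. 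Geodesics asymptotic to $g^\pm$ or $h^\pm$ are disjoint from that axis and never cut a corner. The translates of $\widetilde g,\widetilde h$ containing sides of the broken path are counted once and cross the axis of $g^nh^m$ once. Geodesics through a vertex with endpoints in the box touch the path without crossing it and are counted once at each of the two vertex types. Then exhaust an arbitrary open box by an increasing sequence of such boxes. That needs density of fixed-point \emph{pairs} $(\gamma^-,\gamma^+)$ in $S^1\times S^1\setminus\Delta$ (ping-pong with $a^nb^{-n}$), not just density of axis endpoints in $S^1$ as you state.

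The domination step is also under-justified. After translating by powers of $g$ and $h$, each piece of the excess does lie in a fixed compact set, but there are about $n+m$ pieces, so $\alpha(K)<\infty$ only gives an $O(n+m)$ error. What you need is the following. A geodesic crossing the $\widetilde g$-side at distance $r$ from both vertices, and crossing some other side, meets $\widetilde g$ at angle at most $Ce^{-r}$. The middle contributions are then bounded by the tail $\sum_{j\ge r/\ell(g)}\alpha(\{\text{geodesics crossing }[p,gp)\text{ at angle}\le Ce^{-j\ell(g)}\})$. This series converges: a geodesic crossing $\widetilde g$ at angle $\phi$ spends time about $2\log(1/\phi)$ in the $1$-neighbourhood of $\widetilde g$. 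The image of that neighbourhood in $T^1(\widetilde X/\langle g\rangle)$ is compact, so it has finite $\alpha\otimes dt$-mass; the same holds for $h$. With these two repairs your argument would prove the theorem for any first-kind surface without Theorem~\ref{denseorbit}, a more elementary route than the paper's. As written, however, it does not handle currents that charge every axis.
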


The proof uses work of~Basmajian, Hakobyan and \v{S}ari\'c~\cite{BHS22:TypeProblem}
giving a criterion to detect ergodicity of the geodesic flow for surfaces of infinite type.
In particular, this gives examples of structures with dense orbits of the geodesic flow on infinite-type surfaces of the
first kind.

\medskip

\noindent
\textbf{Continuity of intersection number.}
In the finite-type and compact case, Bonahon proved that the geometric intersection
pairing extends continuously to geodesic currents with respect to the
weak$^*$-topology. For noncompact surfaces of finite type, continuity of the intersection number can already fail~\cite[Section~6]{Sas22:Currents}. We show that passing to infinite type introduces the same loss of compactness without cusps: all relevant intersections may move out through the ends of the surface. We give two main positive sufficient criteria for continuity. In Section~\ref{sec:positive_criteria} we give their proofs and some concrete variations that might be useful in more specific settings. 

\begin{mainthm}[Continuity under domination]
\label{mainthm:cont_domination}
Suppose that $\mu_n\to\mu$ and $\nu_n\to\nu$ weakly$^*$.  If there are
geodesic currents $\overline\mu,\overline\nu$ and constants $A,B>0$ such that
\[
 \mu_n\leq A\overline\mu,
 \qquad
 \nu_n\leq B\overline\nu
 \qquad\text{for every }n,
\]
and $\iota(\overline\mu,\overline\nu)<\infty$, then
\[
 \iota(\mu_n,\nu_n)\longrightarrow\iota(\mu,\nu).
\]
\end{mainthm}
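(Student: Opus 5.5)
The plan is to write the intersection number as the mass of a product measure on the space $\mathcal I$ of transversely intersecting pairs of geodesics in $T^1S$, modulo $\pi_1(S)$. Concretely, $\iota(\mu,\nu)=(\mu\times\nu)(\mathcal I/\pi_1 S)$, or equivalently $\iota(\mu,\nu)=(\mu\times\nu)(\tilde{\mathcal I}\cap F)$ for a measurable fundamental domain $F$ of the diagonal action on the set $\tilde{\mathcal I}$ of transverse pairs in $\mathcal G(\tilde S)\times\mathcal G(\tilde S)$. The proof then becomes a dominated-convergence argument with a tightness step.

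\textbf{Step 1: local weak$^*$ convergence.} Since $\mathcal G(\tilde S)$ is locally compact and second countable, $\mu_n\to\mu$ and $\nu_n\to\nu$ weakly$^*$ imply $\mu_n\times\nu_n\to\mu\times\nu$ weakly$^*$ on $\mathcal G(\tilde S)^2$. To check this, test against products $f\otimes g$ of compactly supported continuous functions and use Stone--Weierstrass density in $C_c$ of a compact product, together with the uniform local mass bounds coming from convergence. The domination passes to the limit: testing against nonnegative $f\in C_c$ gives $\mu\le A\overline\mu$ and $\nu\le B\overline\nu$. Hence every measure in sight, including $\mu_n\times\nu_n$ and $\mu\times\nu$, is bounded by $AB\,\overline\mu\times\overline\nu$. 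This measure is finite on $\tilde{\mathcal I}\cap F$ because $\iota(\overline\mu,\overline\nu)<\infty$.

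\textbf{Step 2: exhaustion.} Exhaust $S$ by compact essential subsurfaces $K_1\subset K_2\subset\cdots$. Let $\mathcal I_j$ be the set of transverse pairs whose intersection point projects into $K_j$, viewed in $F$. Then $\mathcal I_j$ is relatively compact in $\mathcal G(\tilde S)^2$: lifts of intersection points lie in a compact subset of $\tilde S$, and geodesics through a compact set form a compact set. Write
\[
\iota(\mu_n,\nu_n)=(\mu_n\times\nu_n)(\mathcal I_j)+(\mu_n\times\nu_n)(\mathcal I\setminus\mathcal I_j).
\]
The tail satisfies $(\mu_n\times\nu_n)(\mathcal I\setminus\mathcal I_j)\le AB\,(\overline\mu\times\overline\nu)(\mathcal I\setminus\mathcal I_j)$. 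This bound is uniform in $n$ and tends to $0$ as $j\to\infty$ by continuity from above of a finite measure. The same tail bound holds for $\mu\times\nu$.

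\textbf{Step 3: the main obstacle, the compact part.} It remains to show $(\mu_n\times\nu_n)(\mathcal I_j)\to(\mu\times\nu)(\mathcal I_j)$ for a suitable choice of $K_j$. Weak$^*$ convergence gives convergence on relatively compact sets whose boundary is $\mu\times\nu$-null. The boundary of $\mathcal I_j$ has two sources:
\begin{itemize}
\item pairs meeting exactly on $\partial K_j$, or on the boundary of the fundamental domain;
\item non-transverse limits, meaning pairs sharing an endpoint or coinciding.
\end{itemize}
For the first source, choose the curves $\partial K_j$ generically among a one-parameter family of parallel exhaustions. Only countably many parameters can carry positive $\mu\times\nu$ mass, so a.e.\ choice works; the same applies to a translated fundamental domain. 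The second source is the real issue, since mass on coinciding geodesics, such as atoms on a common closed geodesic, can escape from the transverse locus. I would handle it as in Bonahon's compact case: replace the indicator of $\mathcal I_j$ by a continuous compactly supported function that approximates it from above and below. Near the diagonal, the defect in both the upper and lower approximations is controlled by $AB\,\overline\mu\times\overline\nu$ of a small neighborhood of the set of non-transverse pairs within the compact region. This mass is small by the finiteness of $\iota(\overline\mu,\overline\nu)$ combined with Bonahon's local estimate that the $\overline\mu\times\overline\nu$-measure of nearly-tangent pairs over a compact set tends to $0$, which is a statement purely on $K_j$, where the compact theory applies.

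Combining Steps 2 and 3 with an $\varepsilon/3$ argument gives $\iota(\mu_n,\nu_n)\to\iota(\mu,\nu)$.
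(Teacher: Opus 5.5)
Your architecture is the paper's: you realize $\iota$ as the mass of $\mu\times\nu$ on transverse pairs modulo $\Gamma$, get weak$^*$ convergence of $\mu_n\times\nu_n$ from that of the factors, and use $\mu_n\times\nu_n\le AB\,\overline\mu\times\overline\nu$ together with $\iota(\overline\mu,\overline\nu)<\infty$ to prevent loss of mass. Steps 1 and 2 are fine, but Step 3 fails as written.

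Any $\psi\in C_c(G(\widetilde X)^2)$ with $\psi\ge 1_{\mathcal I_j}$ must be $\ge 1$ on the closure of $\mathcal I_j$ in $G(\widetilde X)^2$, and that closure contains non-transverse pairs. Indeed, if $g$ meets the region, take $h_k$ crossing $g$ at a fixed point there with angle tending to $0$; then $(g,h_k)\in\mathcal I_j$ and $(g,h_k)\to(g,g)$. Such pairs can carry mass that has nothing to do with $\iota(\overline\mu,\overline\nu)$. For example, take $\mu_n=\nu_n=\mu=\nu=\overline\mu=\overline\nu=\delta_c$ and $A=B=1$, with $c$ a simple closed geodesic meeting $K_j$. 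Then $\iota(\overline\mu,\overline\nu)=0$ and $(\mu\times\nu)(\mathcal I_j)=0$, yet $\int\psi\,d(\mu\times\nu)\ge(\mu\times\nu)(\{(\tilde c,\tilde c)\})>0$ for every upper approximant $\psi$. So the upper bound cannot come from test functions on $G(\widetilde X)^2$, and the $\overline\mu\times\overline\nu$-mass of a neighbourhood of the non-transverse pairs is not small.

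The repair is to keep the test functions away from the non-transverse locus entirely. Split $\mathcal I_j$ into two parts:
\begin{itemize}
\item the pairs with angle $\ge\theta$, whose closure is a compact subset of the open set of transverse pairs, so portmanteau applies after a generic choice of $\theta$, $\partial K_j$ and the sides of the fundamental domain;
\item the pairs with angle $<\theta$, bounded for every $n$ and for the limit by $AB\,(\overline\mu\times\overline\nu)$ of the transverse pairs meeting in the region at angle $<\theta$.
\end{itemize}
The second bound tends to $0$ as $\theta\to0$ by continuity from above, since that measure is finite on transverse pairs. No local estimate of Bonahon is needed. The uniform near-diagonal control that the closed-surface argument must establish is exactly what domination gives you for free. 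Moreover, the compact theory does not literally apply on $K_j$, since the geodesics leave $K_j$.

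With this fix your argument is the paper's proof, written out by hand. The paper pushes $\mu_n\times\nu_n$ down to Radon measures $\mathbf I(\mu_n,\nu_n)$ on the open quotient $\mathcal I(X)$. Compact subsets of $\mathcal I(X)$ automatically avoid both the ends and the diagonal, so your Steps 2 and 3 collapse into the single fact that the finite measure $AB\,\mathbf I(\overline\mu,\overline\nu)$ is tight. The paper then tests against one cutoff $\chi\in C_c(\mathcal I(X))$, obtaining convergence on the quotient from a function $c$ with $\sum_{\gamma}c(\gamma z)=1$ rather than from a fundamental domain. That removes every null-boundary genericity argument, including your ``translated fundamental domain'', which is not a fundamental domain for $\Gamma$ unless the translation normalizes $\Gamma$.
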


The proof uses the intersection measure $\mathbf I(\mu,\nu) \coloneqq \mu \times \nu$ on the space of transverse pairs of geodesics on $S$. Its total mass is $i(\mu, \nu)$.
Weak$^*$-convergence of the two geodesic currents implies convergence of the $\mathbf I$ measures. Precisely, when all the intersection numbers are finite, $i(\mu_n, \nu_n)$ converges if and only if the measures $\mathbf I(\mu_n,\nu_n)$ are \emph{uniformly tight}, i.e., there is no escape of mass (see Section~\ref{subsec:tightness} for details).  

We emphasize that there also exist sequences of bounded measured laminations
$\alpha_n,\beta_n\to0$ in the uniform weak$^*$ topology for which
$\iota(\alpha_n,\beta_n)=1$ for every $n$ (Section~\ref{subsec:cont_fails}).

Theorem~\ref{mainthm:cont_domination} also allows us to extract another positive criterion stated directly in terms of properties of the two sequences. Fix a Borel fundamental section $F \subset \wt{X}$ for the action of $\pi_1(S)$. The \emph{$F$-incidence tail} of a current $\eta$ outside a compact $K \subset X$ counts the lifts which meet $F$ but whose projected geodesic is not contained in $K$. If these tails tend to zero uniformly for each of the two weak$^*$ convergent sequences, then their intersection numbers converge.
The condition of uniform decay can be interpreted as preventing mass
from escaping to infinity.

\begin{mainthm}[Positive criterion of continuity via uniform decay]
\label{mainthm:cont_uniformdecay}
Let $X$ be any hyperbolic surface.  Suppose that
$\mu_n\to\mu$ and $\nu_n\to\nu$ weakly$^*$.  If both sequences
$\{\mu_n\}$ and $\{\nu_n\}$ are uniformly $F$-decaying, then
\[
\iota(\mu_n,\nu_n)\longrightarrow\iota(\mu,\nu).
\]
\end{mainthm}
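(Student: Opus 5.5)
The plan is to reduce Theorem~\ref{mainthm:cont_uniformdecay} to the tightness characterization of Section~\ref{subsec:tightness}. We will show that the family of intersection measures $\mathbf I(\mu_n,\nu_n)$, restricted to pairs of transverse lifts whose intersection point lies in $F$, is uniformly tight. Once tightness holds, weak$^*$ convergence $\mathbf I(\mu_n,\nu_n)\to\mathbf I(\mu,\nu)$ upgrades to convergence of total masses, which gives $\iota(\mu_n,\nu_n)\to\iota(\mu,\nu)$. We do not route through Theorem~\ref{mainthm:cont_domination}, since uniform decay does not obviously produce dominating currents $\overline\mu,\overline\nu$; instead we mimic its tightness step directly.

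\textbf{Step 1 (finiteness and a uniform bound).} Fix $\epsilon>0$. Uniform $F$-decay gives a compact $K\subset X$ such that, for every $n$, the $F$-incidence tails of $\mu_n$ and $\nu_n$ outside $K$ are at most $\epsilon$.

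\textbf{Step 2 (splitting the intersection mass).} We split the mass of $\mathbf I(\mu_n,\nu_n)$ on pairs $(g,h)$ meeting in $F$ into two parts:
\begin{itemize}
\item pairs where both projected geodesics are contained in $K$;
\item pairs where at least one of them is not contained in $K$.
\end{itemize}
\emph{Bounding the second part.} This is the main obstacle. The tail only controls the $\mu_n$-mass of lifts meeting $F$ that exit $K$. We must multiply this by the $\nu_n$-mass of lifts crossing such a $g$ inside $F$, and that mass is not a priori bounded. The approach we will try first:
\begin{itemize}
\item Decompose $F$ into pieces $F\cap \tilde K$ and $F\setminus\tilde K$.
\item If the intersection point lies in $\tilde K$, the transverse $\nu_n$-mass through the compact set $F\cap\tilde K$ is bounded by $\nu_n(G_{F\cap \tilde K})$. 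By weak$^*$ convergence and compactness this is uniformly bounded in $n$, say by $C_K$.
\item If the intersection point lies outside $\tilde K$, then both geodesics meet $F$ and leave $K$, so the mass is at most the product of the two tails, $\epsilon^2$.
\end{itemize}
This yields a bound of the form $2C_K\epsilon+\epsilon^2$. Here the constant $C_K$ depends on $K$ but is independent of $n$; we will need to choose $K$ first from the decay and then absorb the constant. If this ordering causes circularity, we will instead enlarge $K$ to a compact $K'$ and use the monotonicity of tails in $K$ to make the tail small relative to $C_K$.

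\textbf{Step 3 (the compact part).} The set of pairs of geodesics lying in $K$ and meeting in $F$ is relatively compact in the space of transverse pairs. On it, $\mathbf I(\mu_n,\nu_n)$ converges weakly$^*$ to $\mathbf I(\mu,\nu)$ by the product convergence quoted in the introduction. The boundary of this set, where transversality degenerates, will be handled as in the compact case: we perturb $K$ so that the boundary is null for the limit measure.

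\textbf{Step 4 (conclusion).} Steps 2 and 3 give $\limsup_n|\iota(\mu_n,\nu_n)-\iota(\mu,\nu)|\le O(\epsilon)$. The limit satisfies the same tail bounds by lower semicontinuity of weak$^*$ limits on open sets. Letting $\epsilon\to0$ proves the theorem.
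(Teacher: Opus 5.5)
Your Step 2 is sound and is essentially the paper's core/tail estimate \eqref{eq:core_tail_intersection_error}. The ordering worry has a simpler fix than a second compact set. A pair crossing at a point of $F$ has both geodesics in $E_F$, so the mixed term is at most $\tau_K^F(\mu_n)M_F(\nu_n)$. Moreover $M_F(\nu_n)\le M$, with $M$ independent of $n$ and of $K$, obtained once from a fixed compact $K_0$ whose tails are at most $1$; this is \eqref{eq:uniform_incidence_mass_bound}.

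The genuine gap is Step 3. The set $S_K$ of pairs carried by $K$ and crossing in $F$ is not relatively compact in $\mathcal I(\widetilde X)$. Geodesics carried by $K$ can cross at arbitrarily small angles, for instance lifts of long closed geodesics in $K$ that fellow-travel. So besides escape through the ends, you must also exclude escape of intersection mass toward the non-transverse diagonal, cf.\ \eqref{eq:geometric_tightness}. Perturbing $K$ does nothing here, because the degeneration is in the angle, not near $\partial K$. Also, in $G(\widetilde X)^2$ the diagonal has positive $\mu\otimes\nu$-mass whenever $\mu$ and $\nu$ share an atom, so no boundary-null (portmanteau) argument is available in general. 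Uniform control near the diagonal is exactly the nontrivial content of Bonahon's continuity theorem in the compact case. The paper imports it as continuity of $\iota$ when one factor has compact carrier \cite[Theorem~2.4]{BIPP21:CurrentsSystoles}, and your proposal has no substitute for it.

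Step 3 has a second problem: restriction to $G(K)$ is not weak$^*$ continuous. If $X$ is of the first kind and $K\neq X$, the closed set $G(K)$ has empty interior: move one endpoint so that the geodesic passes through a lift of a point outside $K$. Hence its boundary is all of $G(K)$, and limit currents can charge it. For example, $\frac1k\delta_{c_k}\to\delta_c$ when $c_k$ winds $k$ times around a closed geodesic $c\subset K$ and then leaves $K$ once. So the convergence $\mathbf I(\mu_n,\nu_n)(S_K)\to\mathbf I(\mu,\nu)(S_K)$ you assert is not available from the product convergence.

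The paper avoids comparing the cores with the limit directly and argues one-sidedly:
\begin{itemize}
\item The uniform bound on $M_F$ makes the cores $\mu_n^K=\mu_n|_{G(K)}$ and $\nu_n^K$ relatively compact.
\item Along a subsequence realizing the $\limsup$, they converge to $\hat\mu,\hat\nu$, which are carried by $K$ and satisfy $\hat\mu\le\mu$, $\hat\nu\le\nu$.
\item \cite[Theorem~2.4]{BIPP21:CurrentsSystoles} and monotonicity then give $\limsup_n\iota(\mu_n,\nu_n)\le\iota(\hat\mu,\hat\nu)+2M\delta\le\iota(\mu,\nu)+2M\delta$.
\item The $\liminf$ inequality comes from Proposition~\ref{prop:local_intersection_continuity} and lower semicontinuity of total mass.
\end{itemize}
This structure also removes any need for tail bounds on the limit. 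Your Step 4 could not get those from lower semicontinuity anyway, since $E_F\cap G(K)^c$ is not open.
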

Outside of these hypotheses, one can again find counterexamples; see Section~\ref{subsec:cont_fails} below.

\subsection*{Related results}

Our main results provide new instances of phenomena from the geometry of finite-type surfaces that extend to the infinite-type setting, and some that fail. We briefly mention several related results in the literature which will be invoked or assumed throughout the paper.

\label{subsec:failures}

\noindent
\begin{tabularx}{\textwidth}{
  @{}
  >{\raggedright\arraybackslash}p{0.19\textwidth}
  >{\raggedright\arraybackslash}X
  >{\raggedright\arraybackslash}X
  @{}
}
\toprule
\textbf{Object} & \textbf{Extensions to infinite-type} & \textbf{Limitations in infinite-type} \\
\midrule

Measured laminations
&
Teichm\"uller boundary~\cite{BS21:Boundary} and train-track
descriptions~\cite{Sar21:Traintracks}.
&
Train-track charts need not recover the weak$^*$
topology~\cite[Remark~5.8]{Sar21:Traintracks}.
\\
\addlinespace

Quadratic differentials
&
Associated measured laminations for finite-area
differentials~\cite{Sar24:QuadraticFunctioTheory,
BXS26:quadraticdifferentialsrandomwalks,ST25:IntersectionNumbers}.
&
Associated laminations need not be Thurston
bounded~\cite[Theorem~8.1]{Sar22:Heights}.
\\
\addlinespace

Density
&
Simple closed curves are dense in certain
settings~\cite{Sar24:QuadraticFunctioTheory}.
&
Density can fail for surfaces of the second
kind~\cite{ST25:IntersectionNumbers}.
\\
\addlinespace

Bonahon's embedding
&
Proper in the uniform weak$^*$
topology~\cite{BS21:Boundary}.
&
Many natural sequences fail to converge in this
topology~\cite[Proposition~5]{BS21:Boundary}.
\\
\addlinespace
\hline

Intersection number
&
Continuous under strong conditions Theorems~\ref{mainthm:cont_domination} and \ref{mainthm:cont_uniformdecay}.
&
Not continuous in general in either topology;
see Examples~\ref{ese1} and~\ref{ese2}.
\\
\bottomrule
\end{tabularx}

\medskip

\textbf{Acknowledgments.}
The authors would like to thank Dragomir {\v{S}}ari\'{c} for enlightening conversations, for suggesting the argument for Theorem~\ref{denseorbit} and for giving comments on a first draft of this paper.
The first author was partially supported by MSCA 101107744–DefHyp and the second author by the Luxembourg National
Research Fund AFR/Bilateral-ReSurface 22/17145118 and the MSCA 101154865-CurrGeo.
The authors did not use AI or LLM tools for any aspect of this research or the writing of this manuscript.

\section{Background}

Throughout the paper, $X=\mathbb{H}/\Gamma$ denotes an infinite-type hyperbolic surface, where $\Gamma<\mathrm{PSL}(2,\mathbb R)$ is a Fuchsian group. We say that $X$ is of the \emph{first kind} if $X=\mathcal C(X)$, where $\mathcal C(X)$ denotes the convex core of $X$. By~\cite{BS19:GeodesicallyComplete}, this is equivalent to requiring that $X$ contain neither funnels nor half-planes.

We freely use both the upper half-plane model $\mathbb H$ and the disk model $\mathbb D$ of hyperbolic space, depending on the context.

Let $S$ be a surface with $\chi(S)<0$. By uniformization, $S$ admits hyperbolic structures, and we denote by $\mathcal T(S)$ its Teichm\"uller space. If $S$ is of infinite type, then $\mathcal T(S)$ is disconnected with respect to the Teichm\"uller metric, and its connected components correspond to quasiconformal equivalence classes. Accordingly, for a hyperbolic surface $X\in\mathcal T(S)$, we write $\mathcal T(X)$ for the connected component containing $X$.

Given an exhaustion of an infinite-type surface $S$ by compact subsurfaces $S_n$, we denote by $X_n\subset X$ the corresponding geodesic subsurfaces with totally geodesic boundary.

We now briefly recall the definitions and topologies on geodesic currents that will be used throughout the paper.

\subsection{Geodesic currents and their topologies}
\label{sec:meastheory}

\subsubsection{Geodesic currents}
\label{subsect:GeodesicCurr}

We consider a conformally hyperbolic Riemann surface $X$ of hyperbolic type with universal cover $\widetilde X$.

\textbf{Space of geodesics.}
We follow~\cite{BS21:Boundary}. Let $\partial_\infty \widetilde X$ denote the circle at infinity of $\widetilde X$. Any biholomorphic identification $\widetilde X\cong\D$ extends to a homeomorphism
\[
\widetilde X\cup\partial_\infty\widetilde X
\cong
\D\cup\partial\D.
\]

Each complete oriented geodesic of $\D$ is determined by its two endpoints in $\partial\D$. More generally, let $G(\widetilde X)$ denote the space of complete oriented geodesics of $\widetilde X$ endowed with its Poincar\'e metric. Identifying geodesics with their endpoints at infinity yields a natural identification
\[
G(\widetilde X)
=
\partial_\infty \widetilde X
\times
\partial_\infty \widetilde X
-\Delta,
\]
where
\[
\Delta=
\{(x,x)\,;\,x\in\partial_\infty \widetilde X\}
\]
is the diagonal. In particular, $G(\widetilde X)$ is homeomorphic to an open annulus.

The fundamental group $\pi_1(X)$ acts biholomorphically on $\widetilde X$, preserving the Poincar\'e metric, and acts on $G(\widetilde X)$ by homeomorphisms.

For infinite-type surfaces of the first kind, the space of geodesics is independent of the chosen hyperbolic structure up to equivariant homeomorphism. The following is proved in~\cite[Theorem~1.2]{Sar21:Traintracks}.

\bthm
\label{thm:homeo_geodesics}
Let $X_1$ and $X_2$ be two infinite-type hyperbolic surfaces of the first kind, and let $h \colon X_1 \to X_2$ be a homeomorphism. Then $h$ lifts to an equivariant homeomorphism
\[
\widetilde h \colon G(\widetilde X_1)\to G(\widetilde X_2).
\]
\ethm

There is a natural equivariant homeomorphism
\[
T^1\widetilde X \cong G(\widetilde X)\times\mathbb R,
\]
where $T^1\widetilde X$ denotes the unit tangent bundle of $\widetilde X$; see~\cite{AL17:Structures}.

A \emph{geodesic current} on $X$ is a $\pi_1(X)$-invariant Radon measure on $G(\widetilde X)$. Throughout the paper, all geodesic currents will be assumed to be symmetric, namely invariant under the involution of $G(\widetilde X)$ reversing the orientation of geodesics. We denote the space of geodesic currents by $\GC(X)$.

A \emph{measured lamination} is a geodesic current whose support consists of pairwise non-transversely intersecting geodesics. We denote the space of measured laminations by $\ML(X)$.

Theorem~\ref{thm:homeo_geodesics} induces homeomorphisms between the spaces $\ML(X_1)$ and $\ML(X_2)$, as well as between $\GC(X_1)$ and $\GC(X_2)$, via pushforward of measures.

Geodesic currents can equivalently be described in terms of geodesic-flow invariant measures on the unit tangent bundle. More precisely, if $\mu\in\GC(X)$, then the product measure $\mu\otimes dt$ defines a geodesic-flow invariant, orientation-invariant and $\pi_1(X)$-invariant Radon measure on $T^1\widetilde X$, where $dt$ denotes Lebesgue measure along the geodesic-flow direction. The same argument as in the finite-type setting~\cite[Page~3]{ES22:GeodesicCount} gives the details.

\subsubsection{Two topologies on the space of geodesic currents}

We consider two topologies on the space of geodesic currents, following~\cite{BS21:Boundary}.

First, the \emph{weak$^*$ topology} (also called the \emph{vague topology}) is defined by the seminorms
\[
|\alpha|_\xi
=
\left|
\int_{G(\widetilde X)} \xi\, d\alpha
\right|,
\]
where $\xi \colon G(\widetilde X)\to\mathbb R$ ranges over all continuous compactly supported functions.

We now define the \emph{uniform weak$^*$ topology}. For a continuous compactly supported function $\xi \colon G(\widetilde X)\to\mathbb R$, define
\[
\|\alpha\|_\xi
\coloneqq
\sup_{\phi\in\HHH(\widetilde X)}
\left|
\int_{G(\widetilde X)}
\xi\circ\phi \, d\alpha
\right|,
\]
where $\HHH(\widetilde X)$ denotes the group of biholomorphic automorphisms of $\widetilde X$.

A geodesic current $\alpha\in\GC(X)$ is called \emph{bounded} if $\|\alpha\|_\xi<\infty$ for every continuous compactly supported function $\xi$. We denote by $\GCb(X)$ the space of bounded geodesic currents.

The seminorms $\|\cdot\|_\xi$ induce the \emph{uniform weak$^*$ topology} on $\GCb(X)$. We denote by $\MLb(X)$ the space of bounded measured laminations, namely
\[
\MLb(X)=\ML(X)\cap\GCb(X).
\]

A sequence $\{\alpha_n\}_{n\in\mathbb N}\subset\GCb(X)$ converges to $\alpha\in\GCb(X)$ in the uniform weak$^*$ topology if and only if
\[
\sup_{\phi\in\HHH(\widetilde X)}
\left|
\int_{G(\widetilde X)}
\xi\circ\phi\, d\alpha_n
-
\int_{G(\widetilde X)}
\xi\circ\phi\, d\alpha
\right|
\to 0
\]
for every continuous compactly supported function $\xi\colon G(\widetilde X)\to\mathbb R$.

In particular, the uniform weak$^*$ topology is stronger than the weak$^*$ topology.

By~\cite[Theorem~1.3]{Sar21:Traintracks}, we have the following parametrization theorem.

\bthm\label{Sarconv}
Let $X$ be an infinite-type surface of the first kind and let $\Theta$ be a pants train track. Then the space $\mathcal{ML}(X,\Theta)$ of measured laminations weakly carried by $\Theta$, equipped with the weak$^*$ topology, is homeomorphic to the space $W(\Theta,[0,\infty))$ of edge weight systems on $\Theta$ endowed with the topology of pointwise convergence.

Similarly, the space $\mathcal{ML}_B(X,\Theta)$ of bounded measured laminations equipped with the uniform weak$^*$ topology is homeomorphic to the space $W_B(\Theta,[0,\infty))$ of bounded edge weight systems endowed with the topology of uniform convergence.
\ethm

\section{Injectivity of intersection number}

In this section we prove
that geodesic currents on infinite-type surfaces of first kind are determined by their marked length spectrum,
generalizing Otal's injectivity theorem. 

Given a geodesic current $\mu\in\GC(X)$, its \emph{marked intersection
spectrum} is the function
\[
  \iota(\mu,\cdot)\colon\mathcal C(X)\longrightarrow\mathbb R,
  \qquad c\longmapsto\iota(\mu,\delta_c).
\]
We prove that this function determines $\mu$ when $X$ is of the first
kind.

The following argument was suggested by \v{S}ari\'c.

\bthm\label{denseorbit}
Assume that $X=\mathbb H/\Gamma$ is of the first kind.  There is an
oriented geodesic $a\in G(\widetilde X)$ whose $\Gamma$-orbit is dense
in $G(\widetilde X)$.
\ethm

\bpf
If the geodesic flow on $T^1X$ is ergodic, then almost every orbit is
dense.  The lift of any such orbit determines a geodesic in
$G(\widetilde X)$ with dense $\Gamma$-orbit.

Suppose, therefore, that the geodesic flow on $T^1X$ is not ergodic.
Since $X$ is of the first kind, its convex core is all of $X$.  By
\cite[Theorem~3.4 and Proposition~3.7]{BS19:GeodesicallyComplete}, a
topological pants decomposition can be straightened to a geodesic pants
decomposition $\mathcal P$ of $X$.

Choose an exhaustion $X_1\subset X_2\subset\cdots$ by connected
finite-area geodesic subsurfaces made from pants in $\mathcal P$, with
$X_k\subset\operatorname{int}(X_{k+1})$.  Write
$\alpha_1^k,\ldots,\alpha_{n(k)}^k$ for the boundary cuffs of $X_k$.
The parabolicity criterion
\cite[Proposition~8.2]{BHS22:TypeProblem} says that the resulting
surface is parabolic if
\begin{equation}\label{eq:parabolic}
 \sum_{k=1}^{\infty}
 \frac{1}{\sum_{j=1}^{n(k)}
 \ell(\alpha_j^k)e^{\ell(\alpha_j^k)/2}}=\infty.
\end{equation}

Construct a first-kind hyperbolic surface $Y$, homeomorphic to $X$, by
giving every boundary cuff $\alpha_j^k$ length $1/n(k)$ and choosing
arbitrary twist parameters.  Then
\[
 \sum_{j=1}^{n(k)}\ell_Y(\alpha_j^k)
 e^{\ell_Y(\alpha_j^k)/2}
 =e^{1/(2n(k))}\leq e^{1/2}.
\]
Thus $Y$ satisfies~\eqref{eq:parabolic}; its geodesic flow is ergodic
and has a dense orbit.  A lift of this orbit gives a geodesic in
$G(\widetilde Y)$ with dense $\Gamma$-orbit.  Since $X$ and $Y$ are
homeomorphic surfaces of the first kind,
Theorem~\ref{thm:homeo_geodesics} supplies an equivariant homeomorphism
$G(\widetilde X)\cong G(\widetilde Y)$, which transfers the dense orbit
to $G(\widetilde X)$.
\epf

The intersection pairing is defined exactly as in
\cite[Proposition~4.5]{Bon86:EndsHyperbolicManifolds}; see also
\cite[Section~3.4.5]{AL17:Structures}. Since $X$ is locally compact, so
is $\mathbb PTX$, and therefore also the Whitney sum bundle
\[
\mathbb PTX\oplus\mathbb PTX.
\]
The subset
\begin{equation}
D(X)
=
\{
(x,\vec v,\vec w)\in\mathbb PTX\oplus\mathbb PTX
:
\vec v\neq \vec w
\}
\label{eq:DX}
\end{equation}
is open, hence locally compact in the subspace topology.

As in the finite-type setting, the intersection pairing need not be
finite for arbitrary geodesic currents on infinite-type surfaces.\footnote{For
instance, one may take two transverse pants decompositions endowed with
their counting measures.} Nevertheless, for geodesic currents induced by closed
geodesics one has
\[
\iota(\delta_\alpha,\delta_\beta)=i(\alpha,\beta),
\]
exactly as in Bonahon's construction.

Now we construct a new dense orbit $\gamma$ adapted to any given pair of geodesic currents $\mu, \nu$, in the sense that for any $\xi \in \partial_\infty\tilde X$ with $\xi=\gamma^+$ or $\xi=\gamma^-$, and any \emph{pencil of geodesics} $P(\xi) \coloneqq \{ \eta \in G(\wt{X}) : \eta^+ = \xi \text{ or } \eta^- = \xi \}$, we have $\mu(P(\xi))=0$ and $\nu(P(\xi))=0$.

\blem\label{lem:pencil_zero}
Suppose the $\Gamma$-action on $G(\mathbb H)$ has a dense orbit. Then, for any two arbitrary geodesic currents $\mu, \nu$, there is a geodesic $\gamma$ with dense orbit so that both measures vanish on the pencils based at either endpoint of every translate of $\gamma$.
\elem
\bpf
Let $\rho = \mu + \nu$.
Define
\[
E = \{ \xi \in \partial \mathbb H : \rho(P(\xi)) > 0\}~.
\]
We claim $E$ is countable. Indeed, we exhaust $G(\mathbb H)$ by compact sets $K_m$. For fixed $m,k$ only finitely many disjoint sets $P(\xi) \cap K_m$ can have measure at least $1/k$, since $\rho(K_m) < \infty$.
Taking the union over $m,k$ proves countability.
Let now $\{U_j\}$ be a countable basis for $G(\mathbb H)$. The set of dense-orbit points is
\[
\mathcal T = \cap_j \cup_{g \in \Gamma} g U_j.
\]
Each union is open and dense: it contains the given dense orbit $\gamma$. Thus $\mathcal T$ is a dense $G_\delta$ set.
The geodesics with either endpoint in the countable set $E$ form a countable union of closed nowhere-dense subsets. Baire's theorem, therefore supplies $a \in \mathcal T$, with both endpoints outside of $E$.
Finally, invariance gives $\rho(P(g \xi))=\rho(P(\xi))=0$.
\epf
The condition on the endpoint is necessary:
the current of a closed geodesic has an atom on the pencil based at
either endpoint of a lift.

We use the following measure-theoretic form of Otal's reconstruction
argument.

\bprop\label{prop:otal_reconstruction}
Suppose that $\gamma\in G(\mathbb H)$ has dense $\Gamma$-orbit.  Let $\mu$
and $\nu$ be $\Gamma$-invariant Radon measures on $G(\mathbb H)$ which
give zero mass to the pencils based at the endpoints of every translate
of $a$.  If
\[
  \iota(\mu,\delta_c)=\iota(\nu,\delta_c)
\]
for every closed geodesic $c$ on $\mathbb H/\Gamma$, then $\mu=\nu$.
\eprop

\bpf
Otal's four-axis construction
\cite[proof of Th\'eor\`eme~2]{Otal90:SpectreMarqueNegative} recovers
the measure of a box whose four sides are endpoints of translates of
$a$.  More precisely, short transverse arcs near those four translates
are approximated by fundamental segments of axes of hyperbolic
elements.  An alternating sum of the corresponding four intersection
numbers is twice the measure of the box, up to the measures of four
shrinking boundary strips.  Those error terms tend to zero because the
four boundary pencils have zero mass.  

The endpoints of the translates of $\gamma$ are dense.  The resulting boxes
form a basis for the topology of $G(\mathbb H)$; on
relatively compact open sets they determine a Radon measure.  Equality
on all these boxes therefore implies $\mu=\nu$.
\epf

\bthm\label{currdet}
Let $X$ be an infinite-type hyperbolic surface of the first kind.  The
map
\[
  \GC(X)\longrightarrow\mathbb R^{\mathcal C(X)},
  \qquad
  \mu\longmapsto
  \bigl(\iota(\mu,\delta_c)\bigr)_{c\in\mathcal C(X)}
\]
is injective.
\ethm

\bpf
By Theorem~\ref{denseorbit}, there is
$\gamma\in G(\widetilde X)$ with dense $\pi_1(X)$-orbit. Given any two geodesic currents $\mu, \nu$, by
Lemma~\ref{lem:pencil_zero}, there exists a dense geodesic $\gamma'$ so that both $\mu, \nu$ assign $0$ measure to pencils based at endpoints of lifts of $\gamma'$. By Proposition~\ref{prop:otal_reconstruction}, it thus follows that $\mu=\nu$, as desired.
\epf

\section{Continuity and escape of intersection mass}
\label{sec:positive_criteria}

In this section we give several positive criteria for the continuity of intersection number between geodesic currents for infinite-type surfaces. Roughly, the two criteria require uniform tightness of intersection number, which can be realized by requiring a uniform decay condition on the tails of the sequences of geodesic currents under consideration.

Finally, we give examples illustrating that, outside of the hypotheses, continuity might fail.

\subsection{Continuity under domination}

We write $\sigma\leq\tau$ if
$\sigma(E)\leq\tau(E)$ for every Borel subset of
$G(\widetilde X)$.

\bthm\label{thm:dominated_continuity}
Let $X$ be any hyperbolic surface.  Suppose that
$\mu_n\to\mu$ and $\nu_n\to\nu$ weakly$^*$.  If, for some geodesic currents
$\overline\mu,\overline\nu$ and constants $A,B>0$,
\[
 \mu_n\leq A\overline\mu,
 \qquad
 \nu_n\leq B\overline\nu
 \qquad\text{for every }n,
\]
and $\iota(\overline\mu,\overline\nu)<\infty$, then
$\iota(\mu_n,\nu_n)\to\iota(\mu,\nu)$.
\ethm

The proof will follow from a more precise description of how
intersection mass can escape.  This also explains why the hypothesis
involves the mutual intersection of the two dominating geodesic currents.

\subsection{The intersection measure}

Let
\[
 \mathcal I(\widetilde X)
 =
 \{(g,h)\in G(\widetilde X)^2:g\pitchfork h\}
\]
be the open set of ordered pairs of transversely intersecting geodesics.
The diagonal action of $\Gamma$ on this space is proper, since the
intersection point of a compact family of transverse pairs remains in a
compact subset of $\widetilde X$.  We write
\[
 \mathcal I(X)=\mathcal I(\widetilde X)/\Gamma.
\]
Sending a transverse pair to its intersection point and two tangent
lines identifies this quotient with
\[
 \bigl(\mathbb PTX\mathbin{\times_X}\mathbb PTX\bigr)\setminus\Delta,
\]
where $\Delta$ is the diagonal of equal tangent lines.

If $\mu,\nu\in\GC(X)$, the restriction of the product measure
$\mu\otimes\nu$ to $\mathcal I(\widetilde X)$ induces a Radon measure
$\mathbf I(\mu,\nu)$ on $\mathcal I(X)$.  We use the same normalization
as for the intersection pairing, so that
\begin{equation}\label{eq:intersection_total_mass}
 \mathbf I(\mu,\nu)(\mathcal I(X))=\iota(\mu,\nu).
\end{equation}
For two closed curves, the pushforward of this measure by the
intersection-point map is the sum of Dirac masses at their intersection
points, counted with multiplicity.

Although total mass is not continuous in general, the intersection
measure itself always varies weak-$^*$ continuously.

\bprop\label{prop:local_intersection_continuity}
Let $X$ be any hyperbolic surface.  If
$\mu_n\to\mu$ and $\nu_n\to\nu$ in the weak$^*$ topology on
$\GC(X)$, then
\[
 \mathbf I(\mu_n,\nu_n)\longrightarrow\mathbf I(\mu,\nu)
\]
weak-$^*$ as Radon measures on $\mathcal I(X)$.
\eprop

\bpf
First, weak-$*$ convergence of the two factors implies
\[
 \mu_n\otimes\nu_n\longrightarrow\mu\otimes\nu
\]
weak-$^*$ on $G(\widetilde X)^2$.  Indeed, the assertion is immediate for
compactly supported functions of the form $f(g)k(h)$, and finite sums of
such functions are uniformly dense on compact subsets.  Weak-$^*$
convergence also gives the local bounds on the masses needed to pass
from these finite sums to an arbitrary compactly supported continuous
function.

Let $q:\mathcal I(\widetilde X)\to\mathcal I(X)$ be the quotient map
and take $\varphi\in C_c(\mathcal I(X))$.  Properness of the action gives
a function $c\in C_c(\mathcal I(\widetilde X))$ such that
\[
 \sum_{\gamma\in\Gamma}c(\gamma z)=1
\]
for every $z\in q^{-1}(\supp\varphi)$.  Consequently,
\[
 \int_{\mathcal I(X)}\varphi\,d\mathbf I(\mu_n,\nu_n)
 =
 \int_{\mathcal I(\widetilde X)}
 c(z)\varphi(q(z))\,d(\mu_n\otimes\nu_n)(z).
\]
The integrand on the right has compact support contained in the open
set $\mathcal I(\widetilde X)$, and hence its extension by zero belongs
to $C_c(G(\widetilde X)^2)$.  The product convergence proves the
claim.
\epf

\subsection{A tightness criterion}
\label{subsec:tightness}
Suppose that $\mu_n\to\mu$ and $\nu_n\to\nu$, and set
$\mu_\infty=\mu$ and $\nu_\infty=\nu$.  We say that the pairs
$(\mu_n,\nu_n)$ are \emph{uniformly intersection-tight} if for every
$\epsilon>0$ there is a compact set $C\subset\mathcal I(X)$ such that
\begin{equation}\label{eq:intersection_tightness}
 \sup_{n\in\mathbb N\cup\{\infty\}}
 \mathbf I(\mu_n,\nu_n)(\mathcal I(X)\setminus C)<\epsilon.
\end{equation}

This condition has a direct geometric interpretation.  Let
$x(g,h)\in X$ be the intersection point and let
$\angle(g,h)\in(0,\pi/2]$ be the unoriented angle.  A family of finite
measures on $\mathcal I(X)$ is tight if and only if for every
$\epsilon>0$ there are a compact set $K\subset X$ and $\theta>0$ such
that
\begin{equation}\label{eq:geometric_tightness}
 \sup_{n\in\mathbb N\cup\{\infty\}}
 \mathbf I(\mu_n,\nu_n)
 \bigl(\{[g,h]:x(g,h)\notin K\text{ or }
 \angle(g,h)<\theta\}\bigr)<\epsilon.
\end{equation}
Thus one must rule out both escape through the ends of $X$ and escape
toward the non-transverse diagonal.

\bthm\label{thm:tight_continuity}
Let $X$ be any hyperbolic surface.  Suppose that
$\mu_n\to\mu$ and $\nu_n\to\nu$ weakly$^*$ and that
\[
 \iota(\mu_n,\nu_n)<\infty
 \quad\text{and}\quad
 \iota(\mu,\nu)<\infty.
\]
Then
\[
 \iota(\mu_n,\nu_n)\longrightarrow\iota(\mu,\nu)
\]
if and only if the pairs $(\mu_n,\nu_n)$ are uniformly
intersection-tight.
\ethm

\bpf
Write $\rho_n=\mathbf I(\mu_n,\nu_n)$ and
$\rho=\mathbf I(\mu,\nu)$.  Proposition
\ref{prop:local_intersection_continuity} gives $\rho_n\to\rho$ weak-$^*$.
Assume first that~\eqref{eq:intersection_tightness} holds.  Choose
$\chi\in C_c(\mathcal I(X))$ with $0\leq\chi\leq1$ and $\chi=1$ on
the compact set $C$ appearing there.  Then
\[
 \begin{split}
 |\rho_n(\mathcal I(X))-\rho(\mathcal I(X))|
 \leq{}&
 \left|\int\chi\,d\rho_n-\int\chi\,d\rho\right|\\
 &+\rho_n(\mathcal I(X)\setminus C)
 +\rho(\mathcal I(X)\setminus C).
 \end{split}
\]
The first term tends to zero and the last two are arbitrarily small.
Equation~\eqref{eq:intersection_total_mass} gives the desired
convergence.

Conversely, weak-$^*$ convergence together with convergence of the total
masses implies weak convergence of the finite Radon measures
$\rho_n$ to $\rho$.  Such a convergent sequence is uniformly tight.
Adding the single measure $\rho$ gives
\eqref{eq:intersection_tightness}.
\epf

Since uniform weak$^*$ convergence of geodesic currents implies weak$^*$
convergence, Theorem~\ref{thm:tight_continuity} applies without change
to the uniform weak$^*$ topology.

\begin{proof}[Proof of Theorem~\ref{thm:dominated_continuity}]
The weak$^*$ limits satisfy the same inequalities, and monotonicity of
product measures gives
\[
 \mathbf I(\mu_n,\nu_n)
 \leq AB\,\mathbf I(\overline\mu,\overline\nu).
\]
The measure on the right is a finite Radon measure and is therefore
tight.  Theorem~\ref{thm:tight_continuity} applies.
\end{proof}

\subsection{A uniform incidence-decay criterion}

We now formulate a sufficient condition directly on the two convergent
sequences.  Some care is needed in choosing the fundamental domain.  Fix
a locally finite triangulation of $X$ with compact simplex closures.
For every open simplex $e$, choose one lift $\widetilde e$, and set
\[
 F=\coprod_e\widetilde e\subset\widetilde X.
\]
Thus $F$ is a Borel fundamental section for the action of $\Gamma$ on
points: the restriction $F\to X$ is a Borel bijection.  Moreover, $F$ is
$\sigma$-compact and
\begin{equation}\label{eq:proper_fundamental_section}
 \overline{F\cap p^{-1}(K)}\quad\text{is compact in }\widetilde X
\end{equation}
for every compact set $K\subset X$, where
$p:\widetilde X\to X$ is the covering map.

Let
\[
 E_F=\{g\in G(\widetilde X):g\cap F\neq\varnothing\}
\]
and, for compact $K\subset X$, let
\[
 G(K)=\{g\in G(\widetilde X):p(g)\subset K\}.
\]
Both are Borel sets.  Indeed, $F$ is a countable union of compact sets,
and the set of geodesics meeting a fixed compact subset of
$\widetilde X$ is compact in $G(\widetilde X)$; the set $G(K)$ is
closed.  For a current $\eta$, define its \emph{$F$-incidence mass} and
its \emph{$F$-incidence tail outside $K$} by
\begin{equation}\label{eq:incidence_mass_tail}
 M_F(\eta)=\eta(E_F),
 \qquad
 \tau_K^F(\eta)=\eta\bigl(E_F\cap G(K)^c\bigr).
\end{equation}
We keep the section $F$ fixed.

Given a compact exhaustion $K_1\subset K_2\subset\cdots$ of $X$, where
$K_j\subset\operatorname{int}(K_{j+1})$ and every compact subset of $X$
is contained in some $K_j$, a sequence $\{\eta_n\}$ is \emph{uniformly
$F$-decaying} if
\begin{equation}\label{eq:uniform_F_decay}
 \lim_{j\to\infty}\sup_n\tau_{K_j}^F(\eta_n)=0.
\end{equation}
This condition is independent of the chosen compact exhaustion.  It is
substantially stronger than requiring only the coefficients of curves
outside $K_j$ to tend to zero: it also records how many lifts of those
curves meet the fixed fundamental section.

\bthm\label{thm:incidence_decay_continuity}
Let $X$ be any hyperbolic surface.  Suppose that
$\mu_n\to\mu$ and $\nu_n\to\nu$ weakly$^*$.  If both sequences
$\{\mu_n\}$ and $\{\nu_n\}$ are uniformly $F$-decaying, then
\[
 \iota(\mu_n,\nu_n)\longrightarrow\iota(\mu,\nu).
\]
\ethm

\brem
Uniform weak$^*$ convergence implies weak$^*$ convergence, so
Theorem~\ref{thm:incidence_decay_continuity} applies verbatim when both
sequences converge in the uniform weak$^*$ topology.  The uniform
$F$-decay assumption is not automatic, however: Examples~\ref{ese1}
and~\ref{ese2} converge in the uniform weak$^*$ topology but their
intersection numbers do not converge to the intersection numbers of
their limits.
\erem

\bpf[Proof of Theorem \ref{thm:incidence_decay_continuity}]
For a transverse pair $(g,h)$, let $x(g,h)\in\widetilde X$ be its
intersection point, and put
\[
 D_F=\{(g,h)\in\mathcal I(\widetilde X):x(g,h)\in F\}.
\]
This is a Borel fundamental domain for the diagonal $\Gamma$-action on
$\mathcal I(\widetilde X)$.  Hence
\begin{equation}\label{eq:incidence_intersection_bound}
 \iota(\eta,\zeta)=(\eta\otimes\zeta)(D_F)
 \leq M_F(\eta)M_F(\zeta).
\end{equation}

We first note that the incidence masses of both sequences are uniformly
bounded.  Choose $K=K_j$ so that the two tails in
\eqref{eq:uniform_F_decay} are at most $1$.  By
\eqref{eq:proper_fundamental_section}, the set
\[
 Q_K=\{g:g\cap\overline{F\cap p^{-1}(K)}\neq\varnothing\}
\]
is compact in $G(\widetilde X)$, and
$E_F\cap G(K)\subset Q_K$.  Weak$^*$ convergence uniformly bounds
$\mu_n(Q_K)$ and $\nu_n(Q_K)$.  Adding the tails gives a constant
$M<\infty$ such that
\begin{equation}\label{eq:uniform_incidence_mass_bound}
 \sup_n\max\{M_F(\mu_n),M_F(\nu_n)\}\leq M.
\end{equation}

Fix $\delta>0$ and choose $K=K_j$ so large that (the mass of) both tails is at most
$\delta$.  Decompose them into \emph{core} and \emph{tail} geodesic currents as
\[
 \mu_n=\mu_n^K+\mu_n^{\infty},
 \qquad
 \mu_n^K=\mu_n|_{G(K)},
\]
and define $\nu_n^K$ and $\nu_n^{\infty}$ similarly.  These are
geodesic currents, because $G(K)$ is $\Gamma$-invariant.  The core geodesic currents are
carried by $K$, while
\[
 M_F(\mu_n^{\infty})=\tau_K^F(\mu_n),
 \qquad
 M_F(\nu_n^{\infty})=\tau_K^F(\nu_n).
\]
Consequently, bilinearity and
\eqref{eq:incidence_intersection_bound} give
\begin{equation}\label{eq:core_tail_intersection_error}
 0\leq \iota(\mu_n,\nu_n)-\iota(\mu_n^K,\nu_n^K)
 \leq 2M\delta.
\end{equation}

The two families of core geodesic currents are relatively compact in the
weak$^*$ topology.  To see this, let $Q\subset G(\widetilde X)$ be
compact and choose a compact set $P\subset\widetilde X$ meeting every
geodesic in $Q$, for instance the set of closest points to a fixed
basepoint.  The lifted triangulation is locally finite, so $P$ meets
only finitely many translates of the chosen lifted simplices.  It
follows that, for some $\gamma_1,\ldots,\gamma_{N_Q}\in\Gamma$,
\[
 Q\subset\bigcup_{r=1}^{N_Q}\gamma_rE_F.
\]
Every current $\eta$ is $\Gamma$-invariant, and hence
\[
 \eta(Q)\leq N_QM_F(\eta).
\]
The bound~\eqref{eq:uniform_incidence_mass_bound} now gives uniform
bounds on every compact subset of $G(\widetilde X)$.  Since
$G(\widetilde X)$ is locally compact and second countable, the standard
weak-$^*$ compactness theorem proves the claimed relative compactness;
$\Gamma$-invariance passes to the limits.  Since $G(K)$ is closed, every
limit of core geodesic currents is still carried by $K$.

Pass to a subsequence which realizes the limit superior of
$\iota(\mu_n,\nu_n)$, and then to a further subsequence such that
\[
 \mu_n^K\longrightarrow\widehat\mu,
 \qquad
 \nu_n^K\longrightarrow\widehat\nu.
\]
Both limits are carried by $K$.  Continuity of intersection when one
factor has \emph{compact carrier}, i.e. the projection of its support on $G(\tilde{X})$ is contained in a compact subset of $X$ (\cite[Theorem~2.4]{BIPP21:CurrentsSystoles}) gives
\begin{equation}\label{eq:compact_core_intersection_limit}
 \iota(\mu_n^K,\nu_n^K)\longrightarrow
 \iota(\widehat\mu,\widehat\nu).
\end{equation}
On the other hand, $\mu_n^K\leq\mu_n$ and $\nu_n^K\leq\nu_n$.
Passing to the limit against nonnegative compactly supported functions
shows that
\[
 \widehat\mu\leq\mu,
 \qquad
 \widehat\nu\leq\nu.
\]
Monotonicity of the intersection measure therefore implies
\[
 \iota(\widehat\mu,\widehat\nu)\leq\iota(\mu,\nu).
\]
Together with~\eqref{eq:core_tail_intersection_error} and
\eqref{eq:compact_core_intersection_limit}, this proves
\[
 \limsup_{n\to\infty}\iota(\mu_n,\nu_n)
 \leq\iota(\mu,\nu)+2M\delta.
\]
Letting $\delta\to0$ gives the required upper bound.  For the reverse
bound, Proposition~\ref{prop:local_intersection_continuity} and lower
semicontinuity of total mass under weak-$^*$-convergence give
\[
 \iota(\mu,\nu)
 \leq\liminf_{n\to\infty}\iota(\mu_n,\nu_n).
\]
The two inequalities finish the proof.
\epf

For weighted curves, the domination hypothesis has the following
concrete form.

\bcor\label{cor:summable_envelopes}
Let $\{\alpha_i\}$ and $\{\beta_j\}$ be countable families of closed
curves, and let $A_i,B_j\geq0$ be such that
\[
 \overline\mu=\sum_i A_i\delta_{\alpha_i},
 \qquad
 \overline\nu=\sum_j B_j\delta_{\beta_j}
\]
define geodesic currents.  Suppose moreover that
\begin{equation}\label{eq:summable_intersection_envelope}
 \sum_{i,j}A_iB_j i(\alpha_i,\beta_j)<\infty.
\end{equation}
If $0\leq a_i^{(n)}\leq A_i$ and
$0\leq b_j^{(n)}\leq B_j$ converge pointwise to $a_i$ and $b_j$, then
\[
 \mu_n=\sum_i a_i^{(n)}\delta_{\alpha_i}
 \quad\text{and}\quad
 \nu_n=\sum_j b_j^{(n)}\delta_{\beta_j}
\]
converge weakly$^*$ to the corresponding limit geodesic currents, and
\[
 \iota(\mu_n,\nu_n)\longrightarrow\iota(\mu,\nu).
\]
\ecor

\bpf
Dominated convergence on every compact subset of
$G(\widetilde X)$ gives weak$^*$ convergence.  By Tonelli's theorem,
the left-hand side of~\eqref{eq:summable_intersection_envelope} equals
$\iota(\overline\mu,\overline\nu)$.  Theorem
\ref{thm:dominated_continuity} concludes the proof.
\epf

Condition~\eqref{eq:summable_intersection_envelope}, rather than
separate summability of the two coefficient sequences, is the natural
decay condition: it incorporates the geometry of the intersections at
infinity.

\subsection{Changing the hyperbolic structure}
The preceding results vary the two geodesic currents. Now we consider the case when one of the geodesic currents is a Liouville current, and the other is a fixed measured lamination. For
$X\in\mathcal T(Z)$, let $L_X$ denote the Liouville current, pulled back
using the marking. In this case, we have
\[
 \ell_X(\beta)=\iota(L_X,\beta)
\]
whenever this intersection is finite.

\bprop\label{prop:fixed_lamination_length}
Let $Z$ be a hyperbolic structure on $S$, and let
$\beta\in\ML(Z)$ satisfy $\ell_Z(\beta)<\infty$.  Then
$\ell_X(\beta)<\infty$ for every $X\in\mathcal T(Z)$.

Moreover, for all
$X,Y\in\mathcal T(Z)$,
\begin{equation}\label{eq:fixed_lamination_length_comparison}
 e^{-2d_{\mathrm T}(X,Y)}\ell_X(\beta)
 \leq
 \ell_Y(\beta)
 \leq
 e^{2d_{\mathrm T}(X,Y)}\ell_X(\beta).
\end{equation}

In particular, $X\mapsto\ell_X(\beta)$ is continuous on
$\mathcal T(Z)$.

If $\beta\neq0$, then
$X\mapsto\log\ell_X(\beta)$ is $2$-Lipschitz.

\eprop

\bpf
The comparison between the Thurston length metric and the
Teichm\"uller metric gives
\eqref{eq:fixed_lamination_length_comparison}.  See
\cite{Sor73:Dilatation}.  Taking $Y=Z$ first proves finiteness throughout
$\mathcal T(Z)$.  If $X_n\to X$, then
\[
 \bigl|\ell_{X_n}(\beta)-\ell_X(\beta)\bigr|
 \leq
 \bigl(e^{2d_{\mathrm T}(X_n,X)}-1\bigr)\ell_X(\beta)
 \longrightarrow0.
\]
The logarithmic Lipschitz estimate is another direct consequence of
\eqref{eq:fixed_lamination_length_comparison}.\qedhere
\epf

\brem
Proposition~\ref{prop:fixed_lamination_length} concerns continuity in
the Teichm\"uller variable with $\beta$ fixed.  Note that if one fixes the Teichm\"uller variable and varies the measured lamination weakly$^*$ continuously, the continuity statement fails: length
can escape through the ends in that topology.
\erem

\subsection{Failure of continuity}
\label{subsec:cont_fails}
\leavevmode

We now show that the intersection pairing is not continuous in general.
Both examples can be realized on a first-kind ladder surface of bounded
geometry.  Choose its hyperbolic structure and a pants train track
$\Theta$ periodically, so that the constants in the local train-track
charts are uniform over all handles.

Let $\alpha$ be a simple geodesic arc running from one genus end of
the ladder to the other.  In the $i$th handle choose a meridian $m_i$
so that the curves $m_i$ are pairwise disjoint and
\[
  i(\alpha,m_i)=1.
\]

The track $\Theta$ may be chosen to carry $\alpha$ and all $m_i$, with
disjoint local branches for the different meridians and uniformly
bounded weights for $\alpha$.

\bese\label{ese1}
Set
\[
  \beta_n=\frac1n\sum_{i=1}^n\delta_{m_i}.
\]
The largest branch weight of $\beta_n$ on $\Theta$ is $1/n$.
Theorem~\ref{Sarconv} therefore gives $\beta_n\to0$ in the uniform
weak$^*$ topology.  Nevertheless,
\[
  \iota(\delta_\alpha,\beta_n)
  =\frac1n\sum_{i=1}^n i(\alpha,m_i)=1.
\]
Thus intersection is not continuous even when one factor is fixed.

The intersection measure assigns mass $1/n$ to the intersection in each
of the first $n$ handles.  It therefore converges weak-$^*$ to zero but is
not tight.  Equivalently, the infinite sum
\[
 \overline\beta=\sum_{i=1}^{\infty}\frac1i\delta_{m_i}
\]
satisfies
$\iota(\delta_\alpha,\overline\beta)=\sum_i1/i=\infty$, so
Theorem~\ref{thm:dominated_continuity} does not apply.
\eese

For joint discontinuity, work in each periodic handle, and consider generators $a_i, b_i$ for each handle, with $i(a_i, b_i)=1$ for each $i \in \mathbb{N}$.

\bese\label{ese2}
Define
\[
  \mu_n=\frac1{\sqrt n}\,\sum_{i=1}^n\delta_{a_i},
  \qquad
  \nu_n=\frac1{\sqrt n}\,\sum_{i=1}^n\delta_{b_i},
\]
 By Theorem~\ref{Sarconv}, both sequences
converge to zero in the uniform weak$^*$ topology.  On the other hand,
\[
 \iota(\mu_n,\nu_n)
 =\frac1{n}\sum_{i=1}^n i(a_i,b_i)=1.
\]
Hence the pairing is not jointly continuous in the uniform weak$^*$
topology, and therefore not in the weak$^*$ topology either.
\eese

The next observation rules out a repair based
only on coefficients of curves outside a compact exhaustion.

\bprop\label{prop:coefficient_decay}
There are sequences of weighted simple closed curves $\lambda_n$ and
$\eta_n$ which converge weakly$^*$ to zero, whose coefficients tend to
zero, and for which $\iota(\lambda_n,\eta_n)=1$ for every $n$.
\eprop

\bpf
Choose pairwise disjoint one-holed tori $T_n$ which leave every compact
subset of $X$.  In $T_n$, choose primitive slope curves $a_n,b_n$ with
$i(a_n,b_n)=n^2$, and put
\[
  \lambda_n=\frac1n\delta_{a_n},
  \qquad
  \eta_n=\frac1n\delta_{b_n}.
\]
The supports escape every compact set, so both sequences converge
weakly$^*$ to zero.  Their coefficients are $1/n$, while
\[
  \iota(\lambda_n,\eta_n)
  =\frac1{n^2}i(a_n,b_n)=1.
\]
The intersection measure has mass one in $T_n$, making the failure of
tightness explicit.
\epf

\brem
Proposition~\ref{prop:coefficient_decay} shows that small tail
coefficients alone do not control intersection: local geometric
intersection can grow faster than the coefficients decay.  For the
infinite sums
\[
\overline\lambda=\sum_n\frac1n\delta_{a_n},
 \qquad
 \overline\eta=\sum_n\frac1n\delta_{b_n},
\]
one has
\[
 \iota(\overline\lambda,\overline\eta)
 \geq\sum_n\frac1{n^2}i(a_n,b_n)=\infty.
\]
Thus the additional finite-intersection hypothesis in
Theorem~\ref{thm:dominated_continuity} is indispensable.  Nor do these
sequences contradict Theorem~\ref{thm:incidence_decay_continuity}: once
$T_n$ lies outside $K$, inequality
\eqref{eq:incidence_intersection_bound} gives
\[
 1\leq\tau_K^F(\lambda_n)\tau_K^F(\eta_n),
\]
so the two incidence tails cannot both decay uniformly.
\erem

\thispagestyle{empty}
{\footnotesize
\markboth{References}{References}
\bibliographystyle{alpha}
\bibliography{mybib}{}

@article {BIPP21:CurrentsSystoles,
    AUTHOR = {Burger, Marc and Iozzi, Alessandra and Parreau, Anne and Pozzetti, Maria Beatrice},
     TITLE = {Currents, systoles, and compactifications of character
              varieties},
   JOURNAL = {Proc. Lond. Math. Soc. (3)},
  FJOURNAL = {Proceedings of the London Mathematical Society. Third Series},
    VOLUME = {123},
      YEAR = {2021},
    NUMBER = {6},
     PAGES = {565--596},
      ISSN = {0024-6115},
   MRCLASS = {32G15 (22E40 57K20)},
  MRNUMBER = {4368682},
       DOI = {10.1112/plms.12419},
       URL = {https://doi.org/10.1112/plms.12419},
}

@article{BHS22:TypeProblem,
author = {Basmajian, Ara and Hakobyan, Hrant and {\v{S}}ari\'{c}, Dragomir},
title = {The type problem for Riemann surfaces via {F}enchel--{N}ielsen parameters},
journal = {Proceedings of the London Mathematical Society},
volume = {125},
number = {3},
pages = {568--625},
doi = {10.1112/plms.12465},
url = {https://londmathsoc.onlinelibrary.wiley.com/doi/abs/10.1112/plms.12465},
eprint = {https://londmathsoc.onlinelibrary.wiley.com/doi/pdf/10.1112/plms.12465},
year = {2022}
}

@article {Sor73:Dilatation,
    AUTHOR = {Sorvali, Tuomas},
     TITLE = {On the dilatation of isomorphisms between covering groups},
   JOURNAL = {Ann. Acad. Sci. Fenn. Ser. A. I.},
  FJOURNAL = {Annales Academiae Scientiarum Fennicae. Series A. I.
              Mathematica},
      YEAR = {1973},
    VOLUME = {551},
     PAGES = {15},
      ISSN = {0066-1953},
   MRCLASS = {30A58},
  MRNUMBER = {342694},
MRREVIEWER = {C.\ Earle},
}

@incollection{AFP23,
  author    = {Aramayona, Javier and Fossas, Ariadna and Parlier, Hugo},
  title     = {Big mapping class groups: an overview},
  booktitle = {In the Tradition of Thurston IV},
  series    = {Geometry and Topology Monographs},
  volume    = {24},
  pages     = {63--108},
  publisher = {Mathematical Sciences Publishers},
  year      = {2023},
  doi       = {10.2140/gtm.2023.24.63}
}

@article{MR23,
  author  = {Mann, Kathryn and Rafi, Kasra},
  title   = {Large-scale geometry of big mapping class groups},
  journal = {Geometry \& Topology},
  volume  = {27},
  number  = {6},
  pages   = {2237--2296},
  year    = {2023},
  doi     = {10.2140/gt.2023.27.2237}
}

@book {ES22:GeodesicCount,
    AUTHOR = {Viveka Erlandsson and Juan Souto},
     TITLE = {Geodesic currents and {M}irzakhani's curve counting},
    SERIES = {Progress in Mathematics},
    VOLUME = {xii-151},
 PUBLISHER = {Birkhäuser Cham},
      YEAR = {2022},
     PAGES = {226},
      ISBN = {978-3-031-08704-2},
       DOI = {https://doi.org/10.1007/978-3-031-08705-9},
}

@misc{ST25:IntersectionNumbers,
      title={Intersection numbers between horizontal foliations of quadratic differentials}, 
      author={Dragomir {\v{S}}ari{\'c} and Taro Shima},
      year={2025},
      eprint={2506.14943},
      archivePrefix={arXiv},
      primaryClass={math.CV},
      url={https://arxiv.org/abs/2506.14943}, 
}

@misc{BXS26:quadraticdifferentialsrandomwalks,
  title         = {Quadratic differentials and random walks on the dual graph of a pants decomposition},
  author        = {Charles Bordenave and Xinlong Dong and Dragomir {\v{S}}ari{\'c}},
  year          = {2026},
  eprint        = {2604.11985},
  archivePrefix = {arXiv},
  primaryClass  = {math.DS},
  url           = {https://arxiv.org/abs/2604.11985},
}

@misc{Sar24:QuadraticFunctioTheory,
      title={Quadratic differentials and function theory on Riemann surfaces}, 
      author={Dragomir \v{S}ari{\'c}},
      year={2024},
      eprint={2407.16333},
      archivePrefix={arXiv},
      primaryClass={math.DS},
      url={https://arxiv.org/abs/2407.16333}, 
}

@incollection {AL17:Structures,
    AUTHOR = {Aramayona, Javier and Leininger, Christopher J.},
     TITLE = {Hyperbolic structures on surfaces and geodesic currents},
 BOOKTITLE = {Algorithmic and Geometric Topics around Free Groups and
              Automorphisms},
    SERIES = {Adv. Courses Math. CRM Barcelona},
     PAGES = {111--149},
 PUBLISHER = {Birkh\"{a}user/Springer, Cham},
      YEAR = 2017,
}

@article {BS21:Boundary,
    AUTHOR = {{B}onahon, Francis and {\v{S}}ari\'{c}, Dragomir},
     TITLE = {A {T}hurston boundary for infinite-dimensional
              {T}eichm\"{u}ller spaces},
   JOURNAL = {Math. Ann.},
  FJOURNAL = {Mathematische Annalen},
    VOLUME = {380},
      YEAR = {2021},
    NUMBER = {3-4},
     PAGES = {1119--1167},
      ISSN = {0025-5831,1432-1807},
   MRCLASS = {57K20 (30F60)},
  MRNUMBER = {4297183},
MRREVIEWER = {Felipe\ A.\ Franco},
       DOI = {10.1007/s00208-021-02148-z},
       URL = {https://doi.org/10.1007/s00208-021-02148-z},
}

@article{Bon86:EndsHyperbolicManifolds,
title={Bouts des Vari\'et\'es Hyperboliques de Dimension 3},
volume={124},
DOI={10.2307/1971388},
number={1},
journal={Ann. of Math. (2)},
fjournal={The Annals of Mathematics},
author={Bonahon, Francis},
year={1986},
pages={71--158}}

@incollection{AV20:Big,
  author    = {Aramayona, Javier and Vlamis, Nicholas G.},
  title     = {Big Mapping Class Groups: An Overview},
  booktitle = {In the Tradition of Thurston: Geometry and Topology},
  editor    = {Canary, Richard D. and Leininger, Christopher J. and Margalit, Dan and Schleimer, Saul},
  series    = {Springer Proceedings in Mathematics \& Statistics},
  volume    = {329},
  publisher = {Springer},
  year      = {2020},
  pages     = {459--496},
  chapter   = {12},
  doi       = {10.1007/978-3-030-45187-0_12}
}

@article {BS24:Transverse,
    AUTHOR = {Bestvina, Mladen and Rasmussen, Alexander J.},
     TITLE = {Transverse measures to infinite type laminations},
   JOURNAL = {Comment. Math. Helv.},
  FJOURNAL = {Commentarii Mathematici Helvetici. A Journal of the Swiss
              Mathematical Society},
    VOLUME = {99},
      YEAR = {2024},
    NUMBER = {4},
     PAGES = {717--767},
      ISSN = {0010-2571,1420-8946},
   MRCLASS = {57K20 (30F60 46A55)},
  MRNUMBER = {4815164},
       DOI = {10.4171/cmh/575},
       URL = {https://doi.org/10.4171/cmh/575},
}

@article {Sar22:Heights,
    AUTHOR = {\v{S}ari\'c, Dragomir},
     TITLE = {The heights theorem for infinite {R}iemann surfaces},
   JOURNAL = {Geom. Dedicata},
  FJOURNAL = {Geometriae Dedicata},
    VOLUME = {216},
      YEAR = {2022},
    NUMBER = {3},
     PAGES = {Paper No. 33, 23},
      ISSN = {0046-5755,1572-9168},
   MRCLASS = {30F30},
  MRNUMBER = {4411726},
MRREVIEWER = {Vasilisa\ Shramchenko},
       DOI = {10.1007/s10711-022-00694-7},
       URL = {https://doi.org/10.1007/s10711-022-00694-7},
}

@article{Sas22:Currents,
  author    = {Dounnu Sasaki},
  title     = {Currents on cusped hyperbolic surfaces and denseness property},
  journal   = {Groups, Geometry, and Dynamics},
  volume    = {16},
  number    = {3},
  pages     = {1077--1117},
  year      = {2022},
  doi       = {10.4171/GGD/688},
  url       = {https://ems.press/journals/ggd/articles/7869839}
}

@misc{Bestvina2023:Nielsen,
      title={Towards {N}ielsen-{T}hurston classification for surfaces of infinite type}, 
      author={Mladen Bestvina and Federica Fanoni and Jing Tao},
      year={2023},
      eprint={2303.12413},
      archivePrefix={arXiv},
      primaryClass={math.GT},
      url={https://arxiv.org/abs/2303.12413}, 
}

@article {Sar21:Traintracks,
    AUTHOR = {\v{S}ari\'{c}, Dragomir},
     TITLE = {Train tracks and measured laminations on infinite surfaces},
   JOURNAL = {Trans. Amer. Math. Soc.},
  FJOURNAL = {Transactions of the American Mathematical Society},
    VOLUME = {374},
      YEAR = {2021},
    NUMBER = {12},
     PAGES = {8903--8947},
      ISSN = {0002-9947},
   MRCLASS = {30F60 (30F45)},
  MRNUMBER = {4337933},
       DOI = {10.1090/tran/8488},
       URL = {https://doi.org/10.1090/tran/8488},
}

@Article{Otal90:SpectreMarqueNegative,
    AUTHOR = {Otal, Jean-Pierre},
     TITLE = {Le Spectre Marqu{\'e} des Longuers des Surfaces {\`a} Courbure N{\'e}gative },
   JOURNAL = {Ann. of Math. (2)},
  FJOURNAL = {The Annals of Mathematics},
    VOLUME = 131,
      YEAR = 1990,
    NUMBER = 1,
     PAGES = {151}
}

@article{BS19:GeodesicallyComplete, 
title={Geodesically Complete Hyperbolic Structures}, 
volume={166}, 
DOI={10.1017/S0305004117000792}, 
number={2}, 
journal={Mathematical Proceedings of the Cambridge Philosophical Society}, 
publisher={Cambridge University Press}, 
author={Basmajian, Ara and {\v{S}}ari\'{c}, Dragomir}, 
year={2019}, 
pages={219-242}}

}
	\end{document}